\newtheorem{theorem}{Theorem}
\newcommand{\Z}{\mathbb{Z}}
\newcommand{\F}{\mathbb{F}}
\newcommand{\Q}{\mathbb{Q}}
\newcommand{\R}{\mathbb{R}}
\newcommand{\h}{H}
\renewcommand{\a}{A}
\theoremstyle{definition}
\newtheorem*{remarks}{Remarks}
\numberwithin{equation}{theorem}
\numberwithin{table}{section}
\begin{document}

\author{Jos\'e Alejandro Lara Rodriguez and Dinesh S. Thakur}
\thanks{The authors supported in part by PROMEP grant F-PROMEP-36/Rev-03
SEP-23-006 and
by NSA grant  H98230-13-1-0244 respectively}

\title[Multiplicative relations]{Multiplicative relations between coefficients of 
logarithmic derivatives of $\F_q$-linear functions and applications}

\address{
Facultad de Matem\'aticas, Universidad Aut\'onoma de Yucat\'an, Perif\'erico
Norte, Tab. 13615,
M\'erida, Yucat\'an, M\'exico, lrodri@uady.mx
}

\address{ Department of Mathematics, University of Rochester, 
Rochester, NY 14627 USA, dinesh.thakur@rochester.edu}


\begin{abstract}
We prove some interesting multiplicative relations which hold between the coefficients 
of the logarithmic derivatives obtained in a few simple ways 
from  $\F_q$-linear formal power series. 
Since the logarithmic derivatives connect power sums to elementary 
symmetric functions via  the Newton identities, we establish, as applications, 
new  identities between  important quantities of function field 
arithmetic, such as the Bernoulli-Carlitz fractions and power sums as 
well as their multi-variable generalizations.  
Resulting 
understanding of their factorizations has arithmetic significance, as well 
as  applications to function field zeta and multizeta values evaluations 
and relations between them.  Using specialization/generalization 
arguments, we provide  much more general identities on linear forms 
providing a switch between power sums  for positive and negative powers. 
\end{abstract}

\dedicatory{Dedicated to the memory of Professor Shreeram Abhyankar}

\maketitle

Let $\F_q$ be a finite field of characteristic $p$ and consisting of
 $q$ elements. Let $F$ be a field containing $\F_q$, and let 
 $f(z)=\sum f_iz^{q^i}
 \in F[[z]]$. Let $g(z) = \sum _{i=0}^{\infty} g_i z^{q^i}$ satisfy  
 $f(g(z)) =z$. Carlitz \cite[Thm. 6.1]{C} showed that we also have $g(f(z))=z$. 
 
 We consider  $$h(z)=zf'(z)/f(z), \ \ a(z)= zf'(z)/(1-f(z))$$ 
 and write $h(z)=\sum h_iz^i\in F[[z]]$, $a(z)=\sum a_iz^i$. 
 
 In the special case where $f(z)$ is a polynomial,  
 writing $u=1/z$, we consider power series expansions in 
 $u$ of $h$ and $a$, normalising as follows. We write 
 $-h(z)=\sum \h_i u^i$, $-a(z)=\sum \a_iu^i$. 
 
 If $\theta\in \F_q$, $f(\theta z)=\theta f(z)$ implies that 
 $h(\theta z)=h(z)$ for $\theta \ne 0$, so that 
 $$h_i=\h_i=0\ \ \  \mbox{if} \ \ i\not\equiv 0 \bmod{(q-1)}.$$
  This can also be seen by the standard 
 geometric series development, which also implies that if 
 $f$ is of degree $q^d$, then  
 $$\h_i=A_i=0 \ \ \mbox{if }\ \  i< q^d-1.$$  
 
 In general, 
the coefficients of  $h(z)=zf'(z)/f(z)$ are rather complicated functions of the
coefficients of $f$; however, certain coefficients of the reciprocal can be expressed very simply.

 Carlitz~\cite[8.04, Thm. 8.1]{C} showed (for $f_0=1$, easy to reduce to) 
 \begin{equation} h_{q^k-1} = f_0^{q^k} g_k,\ \ \ \ \ \ \  h_{pm} = h_m^p.\end{equation}

We claim (with no restriction on $f_0$) that for $m\geq 0$, 
$$h_{pm}=h_m^p, \ \h_{pm}=\h_m^p, \ a_{pm}=a_m^p, \ \a_{pm}=\a_m^p,$$
 which follows by slight modification of Carlitz' argument: The 
claim follows immediately from there being no terms with $z^{pm}$
(or $u^{pm}$) in $h-h^p$ or $a-a^p$, which follows (by factoring $h^p$ 
or $a^p$ out) from the fact that $(1/h)^{p-1}-1$ or $(1/a)^{p-1}-1$ are 
of the form $(wz^{-1}\pm 1+ \sum_{i>0} u_iz^{q^i-1})^{p-1}-1$ (with 
$w=0$ in the case of $h$) and thus considering 
exponents modulo $p$ have no terms of the form claimed.

We will explain below  old and new applications of understanding of 
these coefficients 
in the function field arithmetic, and proceed to 
prove four families of  new multiplicative relations for these four sets 
(Theorems 1, 3, 4, 6) of 
coefficients. Theorems 2 and 5 give general identities on linear forms 
from which these identities can be recovered by appropriate specializations. 
Our proofs make use of one set of identities to prove the 
other through Theorems 2 and 5. 

Multivariable power sums of certain types  and more general products have occurred in recent interesting 
works of Federico Pellarin, Rudolph Perkins and Bruno Angles \cite{Pel, Per, AP}. We hope that the
results and ideas in the present work finds some applications also in these  developments.

\section{Main results}

\subsection{$h(z)$ in terms of $z$}

\begin{theorem}
With the notation as above, for $1\leq \ell\leq q$, and 
 $0\leq k_j\leq k$, with $1\leq j\leq \ell$, we have 
\begin{equation}\prod_{j=1}^{\ell} h_{q^k-q^{k_j}} = h_{\sum (q^k-q^{k_j})}.\end{equation}
\end{theorem}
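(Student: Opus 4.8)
The plan is to reduce the claimed multiplicative identity to a statement about the inverse series $g(z)=\sum g_iz^{q^i}$, using the first Carlitz relation $h_{q^k-1}=f_0^{q^k}g_k$ from equation (0.1). First I would normalize to $f_0=1$, which is harmless since replacing $f(z)$ by $f_0^{-1}f(z)$ changes neither $h(z)=zf'(z)/f(z)$ nor the indices in play. Under this normalization $h_{q^k-1}=g_k$, and more generally I expect the key input to be an explicit formula for $h_{q^k-q^{k_j}}$: since $1/h=f/(zf')$ and $f$ has the scaling symmetry $f(\theta z)=\theta f(z)$, one can expand geometrically and read off that $h_{q^k-q^{j}}$ is, up to a known factor, a coefficient of $g$ composed appropriately — concretely I would aim to show $h_{q^k-q^{j}}=g_{k-j}^{q^{j}}$ (for $0\le j\le k$), by applying the $m\mapsto pm$ rule $h_{pm}=h_m^p$ repeatedly: writing $q^k-q^j=q^j(q^{k-j}-1)$ and iterating $h_{q\cdot m}=h_m^q$ (valid since $q$ is a power of $p$) gives $h_{q^k-q^j}=h_{q^{k-j}-1}^{q^j}=g_{k-j}^{q^j}$. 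This is the crucial structural reduction.

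Granting that, the left side of (0.2) becomes $\prod_{j=1}^{\ell}g_{k-k_j}^{q^{k_j}}$, and the right side is $h_{\sum(q^k-q^{k_j})}$. So the theorem reduces to the identity
\begin{equation}
h_{\sum_{j=1}^{\ell}(q^k-q^{k_j})}=\prod_{j=1}^{\ell}g_{k-k_j}^{q^{k_j}},\qquad 1\le\ell\le q.
\end{equation}
The next step is to interpret the exponent $n:=\sum_{j=1}^{\ell}(q^k-q^{k_j})$ and compute $h_n$ directly from the geometric-series expansion of $1/h=f/(zf')$. Writing $f(z)/z=1+\sum_{i\ge1}f_iz^{q^i-1}$ and $f'(z)=f_0=1$ (note $f'(z)=\sum f_iq^iz^{q^i-1}=f_0$ in characteristic $p$ since $q^i\equiv0$ for $i\ge1$, using $q$ a power of $p$ — wait, $q^i$ need not be $0$ unless $i\ge1$: indeed $q^i$ is divisible by $p$ for $i\ge1$, so $f'(z)=f_0=1$), we get the clean fact that $1/h(z)=f(z)/z$, hence $h(z)=z/f(z)=(f(g(z))\text{-type inverse})$... more precisely $h(z)=g$-related via $z/f(z)$. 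Expanding $z/f(z)=(1+\sum f_iz^{q^i-1})^{-1}$ as a geometric series and collecting the coefficient of $z^n$ should produce exactly the sum over compositions that, after using $f(g(z))=z$ to re-express the $f_i$ in terms of $g_i$, telescopes to $\prod g_{k-k_j}^{q^{k_j}}$.

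The main obstacle, I expect, is the combinatorial bookkeeping in this last step: the exponent $n$ can be written as $\ell q^k-\sum q^{k_j}$, and one must show that the only contributions to the coefficient of $z^n$ in the geometric expansion come from the "expected" terms and that lower-order cross terms cancel. The restriction $\ell\le q$ is surely what makes this work — it guarantees that the base-$q$ digits of $\sum q^{k_j}$ behave well (no carrying beyond what is controlled), so that the Lucas-type / freshman's-dream phenomenon in characteristic $p$ kills the unwanted terms. I would handle this by induction on $\ell$, peeling off one factor $h_{q^k-q^{k_\ell}}$ at a time and invoking the (yet-to-be-stated) linear-form identity of Theorem 2 to manage the inductive step, exactly as the introduction hints that Theorems 2 and 5 are the engines driving Theorems 1, 3, 4, 6. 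The verification that the hypotheses of Theorem 2 are met at each stage, given the constraint $\sum_{j=1}^\ell$ ranges over at most $q$ terms, is where the real work lies.
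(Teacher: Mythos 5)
Your opening reductions are correct and consistent with what the paper does: normalizing to $f_0=1$, observing that $f'(z)=f_0=1$ so $h(z)=z/f(z)$, and iterating $h_{pm}=h_m^p$ to get $h_{q^k-q^j}=h_{q^{k-j}-1}^{q^j}=g_{k-j}^{q^j}$. But none of this touches the actual content of the theorem: after these steps the identity still to be proved, $h_{\sum(q^k-q^{k_j})}=\prod_j g_{k-k_j}^{q^{k_j}}$, is just a restatement of (0.2), and you explicitly defer its proof to ``combinatorial bookkeeping'' in the geometric-series expansion of $z/f(z)$, to be managed by ``invoking the linear-form identity of Theorem 2.'' That is where the entire difficulty lives, and the proposed escape route is circular: in the paper, Theorem 2 is \emph{deduced from} Theorem 1 --- the special cases supplied by Theorem 1 (with $B_{ij}=b_i^{q^{k_j}}$ and large, well-separated $k_j$) are exactly what forces the polynomial identity of Theorem 2 to hold identically. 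The paper does remark that Theorem 2 ``should also be provable directly\dots using basic properties of symmetric functions of finite field elements and counting,'' but neither the paper nor your proposal carries that out, so Theorem 2 is not available as an input here.

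What the paper actually does, and what your sketch omits, is to extract from $h(z)f(z)=z$ the recursion $h_{m-1}=-\sum_{i\ge1}h_{m-q^i}f_i$ and run a double induction, on $\ell$ and on $k$. When all $k_j>0$ one pulls out a $q$-th power and reduces to $k-1$, so the inductive step reduces to showing
$h_{q^k-1}\,h_{\ell q^k-q^{k_1}-\cdots-q^{k_\ell}}=h_{(\ell+1)q^k-q^{k_1}-\cdots-q^{k_\ell}-1}$;
this is verified by expanding both factors via the recursion, applying the $q$-power relation to descend to level $k-1$, invoking the induction hypothesis to merge the indices into a single coefficient, and then reversing the steps. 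The hypothesis $\ell+1\le q$ enters to keep the merged indices in the admissible range $k_j\le k$, not through a Lucas-theorem digit argument as you speculate. Without executing some version of this computation (or supplying an independent proof of Theorem 2), the proposal is not a proof.
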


\begin{proof}
If $f_0=0$, then $h$ is identically zero. Also, if $c\in F^*$, the logarithmic 
derivative of $f$ and $cf$ is the same, so that without loss of generality 
we can assume that $f_0=1$. It follows that $h_0=1$, and thus we can assume 
$k_j<k$ without loss of generality.  Since $f'(z)=1$, 
we have  $h(z)=1/(1-(1-f(z)/z))$ 
so that the geometric series development shows that $h_i=0$, unless 
$i$ is a multiple of $q-1$.  
Equating coefficients of $z^m$ in  the identity $h(z)f(z)=z$, we get, for $m>1$, 
\begin{equation}h_{m-1}=-\sum_{i\geq 1} h_{m-q^i}f_i.\end{equation}

We prove the theorem  by induction on $\ell \leq q$. The claim is trivially true for 
$\ell =0$ or $1$. Assume it for  up to and including $ \ell <q$ and we will prove it 
for $\ell +1$, in place of $\ell$.  

Now we do an induction on $k$. The result is vacuously true for 
$k=0$. If all $k_j>0$, taking out $q$-th power 
by Carlitz relation, we reduce it to $k-1$ case and thus prove it, so it is enough to 
prove  
\begin{equation}h_{q^k-1}h_{\ell q^k-q^{k_1}-\cdots -q^{k_{\ell}}}=h_{(\ell +1)q^k-q^{k_1}-
\cdots -q^{k_{\ell}}-1}.\end{equation}

Without loss of generality, we can assume that first $r$ ($0\leq r\leq \ell$) 
of $k_j$'s are at least one  and the next $\ell -r $ of them 
are zero. 

Let $\sum$ (respectively $\sum'$)  stand for the sum, possibly empty, over 
$i$ between $1$ to $r$ (respectively, $n$ between $1$ to $\ell -r$). 
Repeated applications of the recursion (1.2) and $p$-th power relation in (0.1) 
give 

\begin{eqnarray*}
M & := & h_{q^k-1}h_{\ell q^k-\sum q^{k_i}-(\ell -r)}\\ & = &(-\sum_{j=1}^kh_{q^k-q^j}f_j)
(-1)^{\ell -r} \sum_{j_1, \ldots, j_{\ell -r} =1}^k
 h_{\ell q^k-\sum q^{k_i}-\sum' q^{j_n}}f_{j_1}\cdots f_{j_{\ell -r}}\\
       & = & (-1)^{\ell -r +1} \sum_{j, j_1, \ldots, j_{\ell-r}=1}^k
        h_{q^{k-1}-q^{j-1}}^qh_{\ell q^{k-1}-\sum q^{k_i-1}-\sum' q^{j_n-1}}^q
        f_jf_{j_1}\cdots f_{j_{\ell -r}}\\
        & = & (-1)^{\ell -r +1} \sum_{j, j_1, \ldots, j_{\ell-r}=1}^k
        h_{(\ell +1) q^{k-1}-q^{j-1} -\sum q^{k_i-1}-\sum' q^{j_n-1}}^q
        f_jf_{j_1}\cdots f_{j_{\ell -r}}\\
        & = & (-1)^{\ell -r +1} \sum_{j, j_1, \ldots, j_{\ell-r}=1}^k
        h_{(\ell +1)q^k-q^j- \sum q^{k_i}-\sum' q^{j_n}}
        f_jf_{j_1}\cdots f_{j_{\ell -r}}\\
         & = & h_{(\ell +1) q^k-\sum q^{k_i}-(\ell -r+1)}
\end{eqnarray*}
proving the claim (1.3). In more details, the first equality (ignoring the 
definition of $M$ in the first line) follows 
by the recursion applied to the first term and repeatedly ($\ell -r$ times) 
to 
the second term, the second by $p$-power relation (or rather 
its consequence, the $q$-power relation), the third by induction 
on $k$, the fourth by $q$-th power relation again and the last equality 
by the repeated ($\ell -r+1$ times) application of the recursion, noting that 
since $\ell +1 \leq q$, $k_j\leq k$ still. 
\end{proof}

{\bf Remarks}: (1) In particular, 
\begin{equation}h_{\ell(q^k-1)}=h_{q^k-1}^{\ell},\  \mbox{for} \ \ell\leq q.\end{equation}

(2) (1.1) and even (1.4) are false in general, for $\ell=q+1$. 
Example is $f$ being the Carlitz exponential for $q=3$ and $k=1$. 
Breaking this as $8=2\times 3+2$, there is no carry over of digits even.
 Also, in contrast to 
(0.1), for general $m$, even 
$h_{2 m}$ need not be  $h_m^2$, unless $p=2$, a simple example being $q$ 
and $f$ as above and $m=4$.  

(3) Federico Pellarin had discovered the  Theorem 1, in a slightly different language, 
 in an unpublished work, as the authors learned 
from him when they circulated the preprint.

\subsection{$h(z)$ in terms of $u$}

Equating coefficients in the defining equation gives recursion 
$$f_d\h_m=-\delta_{q^d-1, m}f_0-\sum_{j<d}f_j\h_{m-q^d+q^j}, $$ 
where, as usual,  $\delta_{i, j}=1 $ or $0$ according as $i=j$ or not. 

Thus we not only have the $d$-term 
`$\F_q$-linear' recursion $\h_m=-\sum (f_j/f_d)\h_{m-q^d+q^j}$ 
 corresponding to the $\F_q$-linear polynomial 
$P_d:= z^{q^d}+\sum (f_j/f_d)z^{q^j}$, but also the specific initial conditions 
above, namely $h_j=0$ for $j<q^d-1$ and $h_{q^d-1}=-(f_0/f_d)$. 

The roots of $P_d$ form $d$ dimensional vector space $V_d$, and with 
usual correspondence of recursion and roots of corresponding polynomials, 
we see that coefficients $\h_m$ are linear combination of $m$-th powers 
of these roots. The initial conditions mean (see e.g., \cite[5.1.2 or 5.6.2]{T})
 that $\h_m=-\sum v^m$, where 
the sum is over $v\in V_d$. (Another way to see this directly is by the 
use of Newton's formulas giving the power sums in terms of the elementary symmetric 
functions, which exactly does this).

Similarly, if in 1.1, we specialize to $f$  an arbitrary  polynomial of degree $q^d$, then 
we get recursion corresponding to a polynomial $p_d:= z^{q^d-1} +\sum z^{q^d-q^i}f_i$, 
whose reciprocal polynomial (use $u=1/z$), after multiplication by $u$ introducing a 
zero, is arbitrary $\F_q$ linear polynomial in $u$. Thus the first theorem specializes to 
multiplicative relations, for $H_m=-\sum v^{-m}$ , for special $m$'s. 

We now claim more general 

\begin{theorem}
 Let $F$ be a field containing $\F_q$, 
let $b_1, \cdots, b_d\in F$ be $\F_q$-linearly independent and 
let $B_{ij}\in F$, for $i=1$ to $d$ and $j=1$ to $s\leq q$. Then
$$\prod_{j=1}^s\sum_{(\theta_1, \cdots, \theta_d)\in \F_q^d-\{0\}}\frac{\sum_i\theta_iB_{ij}}{\sum_i \theta_ib_i}
=(-1)^{s-1} \sum_{\theta} \frac{\prod_j (\sum_i \theta_iB_{ij})}{(\sum_i \theta_ib_i)^s}.$$
\end{theorem}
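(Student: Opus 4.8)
The plan is to deduce Theorem 2 from Theorem 1 by a specialization/generalization argument, in the spirit announced in the introduction. First I would observe that the left side of the asserted identity is a product of $s$ linear forms in the $B_{ij}$ (for fixed $j$), and the right side is also multilinear of the same multidegree in the $B_{ij}$; so it suffices to verify the identity on a spanning set of the space of such tuples $(B_{ij})$. The natural choice is $B_{ij} = b_i^{q^{k_j}}$ for arbitrary exponents $k_j \ge 0$ — since the maps $(b_1,\dots,b_d)\mapsto(b_1^{q^{k}},\dots,b_d^{q^{k}})$ are the Frobenius twists and, by $\F_q$-linear independence of the $b_i$, one shows these ``Frobenius columns'' span enough of $F^d$ to pin down a multilinear identity (this is the step where one must be careful: one wants that the vectors $(b_1^{q^k},\dots,b_d^{q^k})$, $k=0,1,\dots$, are ``generic enough'', or else argue over the function field $\F_q(b_1,\dots,b_d)$ with the $b_i$ indeterminate, where Moore-determinant nonvanishing gives linear independence outright).

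Next I would connect the specialized sums to the coefficients $\h_m$ (equivalently $H_m$). With $f$ the $\F_q$-linear polynomial $P_d(z)=\prod_{v\in V_d}(z-v)$ whose roots $V_d = \{\sum_i\theta_i b_i : \theta\in\F_q^d\}$, the text has already established $\h_m = -\sum_{v\in V_d} v^m$ and, via the reciprocal-polynomial normalization $u=1/z$, that $H_m = -\sum_{v\in V_d, v\ne 0} v^{-m}$. So taking $B_{ij}=b_i^{q^{k_j}}$ turns $\sum_{\theta\ne 0}(\sum_i\theta_i b_i^{q^{k_j}})/(\sum_i\theta_i b_i) = \sum_{v\ne 0} v^{q^{k_j}}/v = \sum_{v\ne 0} v^{q^{k_j}-1}= -H_{-(q^{k_j}-1)}$ in the appropriate indexing; more usefully, I would pick exponents so that each factor becomes $-\sum_{v\ne 0} v^{q^k - q^{k_j}} = H_{q^k - q^{k_j}}$ for a common large $k$ (replacing $B_{ij}=b_i^{q^{k_j}}$ by $B_{ij}=b_i^{q^k}\cdot(\text{something})$ is not linear, so instead I would homogenize: multiply numerator and denominator through, or equivalently clear denominators and compare). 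The cleanest route is: the right side becomes $(-1)^{s-1}\sum_{v\ne 0} v^{q^{k_1}+\cdots+q^{k_s} - s}$, which is (up to sign) $H_{\sum(q^k - q^{k_j})}$-type after the common normalization, and the left side becomes $\prod_j H_{q^k-q^{k_j}}$; then the polynomial identity (1.1)/(1.3) of Theorem 1, specialized to the polynomial $p_d$ as in the paragraph preceding the theorem, is exactly the required equality.

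Then I would run the genericity argument in reverse: having verified the identity for all tuples of the form $B_{ij}=b_i^{q^{k_j}}$ (all choices of $k_1,\dots,k_s\ge 0$), and knowing both sides are multilinear in the $s$ columns $(B_{1j},\dots,B_{dj})$, I conclude the identity holds for all $B_{ij}\in F$ provided the column vectors $\{(b_1^{q^k},\dots,b_d^{q^k}) : k\ge 0\}$ span $F^d$ as an $F$-vector space. Over an infinite $F$ this needs the $b_i$ to be, say, algebraically independent or at least to satisfy a Moore-type nonvanishing; the safe and standard move is to first prove Theorem 2 with $b_1,\dots,b_d$ independent transcendentals over $\F_q$ (where the Moore determinant $\det(b_i^{q^{j-1}})\ne 0$ gives the span, and $\F_q$-linear independence is automatic), establish it there as an identity of rational functions, and then specialize $b_i$ to any $\F_q$-linearly independent elements of any $F$ — both sides being defined and continuous (in the Zariski sense) away from the poles $\sum_i\theta_i b_i = 0$, which are excluded precisely by $\F_q$-linear independence.

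The main obstacle I anticipate is exactly this last descent: making rigorous that ``verify on Frobenius columns $\Rightarrow$ identity for all $B_{ij}$''. The multilinearity is immediate, but one must check that the chosen test-tuples genuinely span the relevant space — equivalently that no nontrivial multilinear form vanishes on all of them — and handle the poles. Working first over the rational function field $\F_q(b_1,\dots,b_d)$, where Moore determinants settle the spanning question cleanly, and only afterward specializing, is the way I would organize the argument to sidestep these difficulties; the rest is bookkeeping with the already-proved relations $\h_m=-\sum v^m$, $H_m = -\sum v^{-m}$, and Theorem 1.
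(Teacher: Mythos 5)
Your proposal is correct and follows the paper's overall strategy --- verify the identity on the Frobenius-power specializations supplied by Theorem 1 and then argue that these determine the general statement --- but the generalization step, which is the only substantive content of the proof, is carried out by a genuinely different mechanism. The paper clears denominators, regards the difference of the two sides as a single polynomial $\phi$ in all $(s+1)d$ quantities $b_i, B_{ij}$ with coefficients in $\F_q$, and observes that for $k$ and the gaps between the $k_j$'s large enough the substitutions $B_{ij}=b_i^{q^{k_j}}$ keep distinct monomials of $\phi$ distinct, so no cancellation can occur and $\phi\equiv 0$ (citing Chai's rigidity result). You instead use that both sides are multilinear in the $s$ columns $(B_{1j},\cdots,B_{dj})$ and that the Frobenius columns span $F^d$; the latter is exactly Moore's determinant criterion, whose nonvanishing is equivalent to the $\F_q$-linear independence of the $b_i$ that the theorem already hypothesizes. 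Your version needs only $d$ twists rather than arbitrarily large gaps, and it makes the preliminary passage to transcendental $b_i$ unnecessary for the spanning step (it remains a harmless way to organize the pole-avoidance). Two cautions. First, the specializations $B_{ij}=b_i^{q^{k_j}}$ with $k_j\ge 1$ that you name as the ``natural choice'' turn the identity into $\prod \h_{q^{k_j}-1}=\h_{\sum q^{k_j}-s}$, which is Theorem 3 --- in the paper's logic a consequence of Theorem 2, so testing on those columns would be circular; what Theorem 1 actually furnishes, after writing $b_i=c_i^{q^k}$ for a basis $c_i$ of $V_d$, are the columns $B_{ij}=b_i^{q^{k_j-k}}$ with $0\le k_j\le k$, i.e., inverse Frobenius twists. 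You pivot to these mid-argument, and Moore's criterion applies to them equally well (roots of $\F_q$-linearly independent elements under a power of Frobenius are again $\F_q$-linearly independent), but the written proof must test on these columns, not the forward twists. Second, for non-perfect $F$ these roots live only in the perfect closure, so one should prove the identity there and then restrict; this costs nothing since the asserted identity involves only elements of $F$.
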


\begin{proof}
As explained above, the Theorem in 1.1 proves the special cases when $b_i$ is 
$q^k$-th power of $i$-th element of  a basis (and we can specialize to linearly dependent ones) of arbitrary $V_d$, and $B_{ij}=b_i^{q^{k_j}}$, with $0< k_j< k$ arbitrary. 

By subtracting one side of the identity from the other 
and by making a common denominator,  rewrite it as a polynomial identity 
$\phi(b_1, \cdots, b_d, B_{11}, \cdots, B_{1d}, \cdots B_{sd})=0$, where 
 $\phi$ is $(s+1)d$ variable polynomial 
with coefficients in $\F_q$. So we know that 
$\phi(b_1, \cdots, b_d, b_1^{q^{k_1}}, \cdots, b_d^{q^{k_1}}, \cdots b_d^{q^{k_d}})=0$,
These specializations 
are enough (The authors thank Ching-Li Chai for immediately providing much more general  reference  \cite[3.1]{Chai})
to conclude that $\phi$ is identically $0$,   since if we take $k$ and all the gaps between 
$k, k_i$'s sufficiently large, there can be no cancellations between terms with different 
powers of $b_j$ terms with different $q^{k_i}$ powers, so that all $b_j^{q^{k_i}}$ can be replaced by independent variables $B_{ij}$. 
\end{proof}

We now use this theorem to prove

\begin{theorem}
With the notation as above, for $1 \le s \le q$ and   $k_i \ge 1$, with $1
\le i \le s $, we have
\begin{align*}
\prod _{i=1}^s  \h_{q^{k_i}-1}
=   \h_{q^{k_1} +\dotsb +q^{k_s }-s }.
\end{align*}
\end{theorem}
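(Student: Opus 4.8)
The plan is to deduce Theorem 3 from Theorem 2 by a direct specialization, exactly mirroring how Theorem 2 was used to prove the other families. Recall from the discussion in Section 1.2 that if $f$ is an arbitrary $\F_q$-linear polynomial of degree $q^d$, then $\h_m = -\sum_{v\in V_d} v^{-m}$, where $V_d$ is the $d$-dimensional $\F_q$-vector space of roots of the reciprocal-type polynomial $p_d$. Concretely, fixing an $\F_q$-basis, the nonzero roots are $v = 1/(\sum_i \theta_i b_i)$ as $(\theta_1,\dots,\theta_d)$ ranges over $\F_q^d\setminus\{0\}$ for suitable $b_i$; more to the point, for \emph{any} $\F_q$-linearly independent $b_1,\dots,b_d$ we may read $\h_m$ off as $-\sum_{\theta\ne 0}(\sum_i\theta_i b_i)^m$ when $m<0$, i.e. as a power sum of negative powers of the linear forms $\sum_i \theta_i b_i$.

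First I would set, in the notation of Theorem 2, $B_{ij} = b_i$ for all $i$ and all $j = 1,\dots,s$ (so every factor on the left is the same, and the products $\prod_j(\sum_i\theta_iB_{ij})$ become $(\sum_i\theta_ib_i)^s$). With this choice the left-hand side of Theorem 2 becomes $\bigl(\sum_{\theta\ne0}\frac{\sum_i\theta_ib_i}{\sum_i\theta_ib_i}\bigr)^s$ — but that collapses to a constant, which is the wrong specialization. Instead the correct choice is to take $B_{ij}$ so that $\sum_i\theta_i B_{ij} = (\sum_i\theta_i b_i)^{q^{k_j}}$; since the map $x\mapsto x^{q^{k_j}}$ is $\F_q$-linear, setting $B_{ij} = b_i^{q^{k_j}}$ achieves exactly this. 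Then $\frac{\sum_i\theta_iB_{ij}}{\sum_i\theta_ib_i} = (\sum_i\theta_ib_i)^{q^{k_j}-1}$, so the left side of Theorem 2 reads $\prod_{j=1}^s\sum_{\theta\ne0}(\sum_i\theta_ib_i)^{q^{k_j}-1}$, and the right side reads $(-1)^{s-1}\sum_{\theta\ne0}(\sum_i\theta_ib_i)^{q^{k_1}+\dots+q^{k_s}-s}$.

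Next I would identify each power sum with a coefficient $\h$. Writing $S(n) := \sum_{\theta\ne0}(\sum_i\theta_ib_i)^n$ for $n\ge 0$, we have, for a suitable polynomial $f$ of degree $q^d$ realizing these roots, $\h_{q^{k_j}-1} = -S(q^{k_j}-1)$ and $\h_{q^{k_1}+\dots+q^{k_s}-s} = -S(q^{k_1}+\dots+q^{k_s}-s)$ (here one uses $k_i\ge1$, which guarantees the exponents are positive multiples of $q-1$ so the correspondence with the $u$-expansion coefficients $\h$ is the one described in Section 1.2, with the initial conditions $\h_j=0$ for $j<q^d-1$). Substituting, Theorem 2 becomes $\prod_{i=1}^s(-\h_{q^{k_i}-1}) = (-1)^{s-1}(-\h_{q^{k_1}+\dots+q^{k_s}-s})$, i.e. $(-1)^s\prod_i\h_{q^{k_i}-1} = (-1)^s\h_{q^{k_1}+\dots+q^{k_s}-s}$, which is precisely the claimed identity after cancelling the sign.

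The main obstacle — really the only nontrivial point — is justifying that the $b_i$ in Theorem 2 can be chosen to be genuine roots-data of an honest $\F_q$-linear polynomial $p_d$ of degree $q^d$ arising as a reciprocal of an $f$ of degree $q^d$, so that the power sums $S(n)$ do coincide with the coefficients $\h_n$; and, conversely, that Theorem 2 holds for \emph{arbitrary} $\F_q$-independent $b_i$, not merely those of this special shape. The latter is exactly the content of Theorem 2 as proved (via the Chai reference), so no extra work is needed there. For the former, I would note that given any $d$-dimensional $\F_q$-subspace $V_d\subset F$ one may take $p_d(u) = \prod_{v\in V_d}(u - v)$, an $\F_q$-linear polynomial, whose reciprocal (times $z$) is of the form $z f'(z)$ with $f$ of degree $q^d$, matching the setup of Section 1.1; then pick $b_1,\dots,b_d$ to be (reciprocals of) a basis of $V_d$. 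Since Theorem 2 already covers all $\F_q$-independent tuples $(b_i)$, it suffices that \emph{one} such realization exists, which is immediate. Thus the proof is essentially a bookkeeping of signs and of the dictionary between linear forms, roots, and the coefficients $\h_m$.
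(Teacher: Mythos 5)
Your proposal is correct and is essentially the paper's own proof: Theorem 3 is obtained by specializing Theorem 2 to $B_{ij}=b_i^{q^{k_j}}$ with $b_i$ a basis of the root space of an $\F_q$-linear $f$ of degree $q^d$, so that each factor becomes the power sum $\sum_{\theta\ne 0}(\sum_i\theta_i b_i)^{q^{k_j}-1}=-\h_{q^{k_j}-1}$ and the signs cancel exactly as you compute. The only (harmless) wrinkle is the preamble's detour through the reciprocal polynomial and negative exponents --- the relevant dictionary is simply $\h_m=-\sum_{v\in V_d}v^{m}$ with $V_d=\ker f$, which is what your calculation actually uses.
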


\begin{proof}
We now specialize previous theorem for the case where $b_i$ 
is a basis  of arbitrary $V_d$, and $B_{ij}=b_i^{q^{k_j}}$, with $k_j$ arbitrary positive, 
giving the required relations for the power sums which represent these coefficients. 
\end{proof}

\begin{remarks}
\begin{enumerate}
\item  In particular, for $1 \le s  \le q$ and $k \ge 1$, we have
\begin{equation}
\h_{q^k-1}^s  =  \h_{s (q^k-1)}.
\end{equation}
\item Since $\h_{q^k-1} = 0$ if $k<d$, then $\h_{q^{k_1} +\dotsb +q^{k_s}-s}=0$ if
some $k_i<d$.
\item (2.1) is false in general, for $s  = q+1$.
Let $q = 3$ and $f(z) = f_0 z + f_1 z^q+ f_2 z^{q^2}$ with $f_0\ne 0$. Then $H_2
=0$ and $H_8 =-f_0/f_2$. Also, note that (in contrast to 
the first theorem identities) $\h_{q^2-1} \h_{q^3-q^2} =
\h_{q^2-1}\h_{q-1}^{q^2}=0$ while  $\h_{q^2-1+q^3-q^2} = \h_{q^3-1}
={f_{0}f_{1}^{3}}/{f_{2}^{4}}$.

\item  Note that the identity in the theorem is equivalent to a similar identity (with sign $(-1)^{s-1}$ 
absent) where the sum is over only tuples with $\theta_i=1$ for largest $i$ for which 
$\theta_i$ is non-zero (the `monic' version). 
Specialzations  $b_i=t^{i-1}$ and $B_{ij}=t^{q^j(i-1)}$, in Theorem 2, gives the power sum identities for 
$S_{<d}(k)$'s recalled in 2.3 below, 
whereas $b_i=\theta^i$, $B_{ij}=t_j^i$ relates to Rudolph Perkins' identity \cite[Thm. 4.1.2]{Per}, when 
combined with Carlitz evaluation of the sums for $m=q^j-1$ in the first case. 
But these evaluations can be done in general setting of Theorem 2, by using the Moore determinants (see e.g., \cite[2.11(b)]{T} ). So the theorem gives multi-variable generalization-deformation 
of the identities for power sums of polynomials.  The Theorem 2 should also be provable directly 
as stated using basic properties of symmetric functions of finite field elements and counting.

\item Since for $k>0$, $\sum \theta^k=-1$ or $0$, where $\theta$ runs through 
elements of $\F_q$,  according to whether $k$ 
is `even' or not,  and $\sum \theta^0= \sum 1=0$, where we interpret 
$0^0=1$, we have 
\begin{eqnarray*}
\sum_{\theta_1, \cdots, \theta_d\in \F_q}(\theta_1b_1+\cdots +\theta_db_d)^m & = &\sum
{m\choose m_1, \cdots, m_d}\prod b_k^{m_k}\sum \prod \theta_k^{m_k}\\ & = &\sum_{m_i `even'>0}
{m\choose m_1, \cdots, m_d}\prod b_k^{m_k}(-1)^d.
\end{eqnarray*}

Thus our claim is also equivalent to multinomial identity, that for $s\leq q$ and $m_i>0$ `even', 
$${\sum_{i=1}^s q^{k_i}-1\choose m_1, \cdots, m_d}=(-1)^{(d-1)(s-1)}\sideset{}{'}\sum_{(m_1, \cdots, m_d)=
\sum_i(i_1,\cdots, i_d)}
\prod_i {q^{k_i}-1\choose i_1, \cdots, i_d} \mod p,$$
where $\sum'$ denotes the sum over restricted tuples such that $i_j>0$ are `even'.

We have the well-known  identity 
$$\sum_{j=0}^k{a\choose j}{b\choose k-j}={a+b\choose k}$$
 of similar flavour, for any (variables) $a$, $b$. (The identity and its 
 multinomial generalization is obtained by comparing 
 coefficients of $(\sum x_i)^a(\sum x_i)^b=(\sum x_i)^{a+b})$. 
But  
this differs because of the omission of $i_k=0$,  restriction 
to `even', sign in front, specialized $a$, $b$ and the characteristic. 

Let us prove for any $q$, the simplest case of $d=s=2$. The definition 
of binomial coefficients in terms of factorials shows that in characteristic 
$p$, we have ${p^m-1\choose j}=(-1)^j$ and ${p^m-2\choose j}=(-1)^j(j+1)$. 
Let us assume $a=q^r-1<b=q^s-1$, without loss of generality. We want to prove 
${a+b\choose k}=-\sum {a\choose i}{b\choose j}$ modulo $p$ where the 
$k$ is `even' and the sum is over $i, j>0$ and `even' such that $i+j=k$. 
The binomial coefficients on the right are just $1$, so right side is just number 
of `even' $i$'s satisfying $\max(0, k-b)< i< \min(a, k)$. 

This is $k/(q-1)-1\equiv -k-1$, $(a-(k-b)/(q-1)-1\equiv (-1-1-k)/(-1)-1\equiv k+1$ 
and $a/(q-1)-1\equiv -1/(-1)-1\equiv 0$ accroding as $k<a$, $k>b$, and $a<k<b$ respectively. 
By Lucas theorem, considering base $q$ expansion of $a+b$ and $k=\sum_{i=0}^{\ell}k_i$, the right side respectively 
is $(-1)^{\sum_{i>0}k_i}(-1)^{k_0}(k_0+1)\equiv k_0+1\equiv k_1$, 
$1*(-1)^{k_{\ell-1}+\cdots k_1}(-1)^{k_0}(k_0+1)\equiv -(k+1)$ and $0$, as required. 

\item  The geometric series development in the defining equation, together 
with multinomial expansions of the resulting powers,  shows 
that $\h_m$ is sum of terms 
$(-f_0/f_d) {\sum m_j\choose (m_j)_j}\prod (-f_j/f_d)^{m_j}$, one 
for each decomposition $m=(q^d-1)+\sum m_j(q^d-q^{j})$, where 
${\sum m_j\choose (m_j)_j}$ is the multinomial coefficient, and 
$m_j\geq 0, 0\leq j<d$.

If we consider $m=(\sum q^{k_i})-s=q^d-1+\sum b_j(q^d-q^j)$, then considering modulo $q$, 
we see that $m_0$ is $s-1$ plus a positive multiple of $q$. Transferring 
the resulting terms to the left side, we see that 
 Theorem 3 follows from the claim that under the notation 
of Theorem 3, if $(\sum q^{k_i})-sq^d=\sum a_j(q^d-q^{d-j})$, with 
$a_j\geq 0$ (note $a_j=b_j$ for $j>0$ and $a_0=b_0-(s-1)$),
then there are $m_{ij}$ such that $a_j=\sum m_{ij}$ and 
$q^{k_i}-q^d=\sum m_{ij}(q^d-q^{d-j})$ and the multinomial 
coefficients satisfy 
$${\sum b_j\choose (b_j)_{ j}}=\prod_i {\sum_j m_{ij}\choose (m_{ij})_j}.$$

Note that when $a>b$, the (base $q$) digit expansion of $q^a-q^b$ consists 
of all $q-1$ digits followed by all $0$ digits.  Thus if 
$m$ is of the form $q^k-q^d$, with $k>d$, the decomposition 
as above can be obtained by  matching (not necessarily in an unique way) 
its $(q-1)$ digits by 
those of $q^d-q^{j}$'s by appropriate shifts (which 
are obtained by multiplication by powers of $q$) and addition 
without carry overs, by considering $m_j$'s as sums of such powers 
of $q$ given by their digit expansions. 

Let us now first show the special case that if for $\ell>0$ of 
$i$'s we have $k_i=j< d$, then both the sides are zero. 
Since $m_j$'s are non-negative, for such $j$'s, $m_j$ is a 
positive multiple of $q$ minus 1 ($-1$ corresponding to 
$q^{j}-q^d$), contributing $0$-th digit $q-1$, so if $\ell>2$, 
we have carry over in adding such $m_j$'s resulting in vanishing of 
the multinomial coefficient in the coefficient recipe above. 
For the general case, this can also be seen  since $s>\ell\geq 1$
and modulo $q$ we have $m_0$ is $s-1\geq 1$ gives carry-over 
and vanishing in any case. 

So without loss of generality, we assume that $k_i\geq d$. 
We need only to look at when $\sum b_j$'s do not have carry overs, 
thus $\sum_j m_{ij}$'s also do not have carry 
over, and the corresponding expansions of $q^{k_i}-q^d$ are just obtained 
by shifting and patching as explained above, so that for given $i$, any $q^k$ at most 
occurs once in $m_{ij}$'s. Thus for $s\leq q-1$, there can not be 
any carry over in the sum of $b_j$'s and the required multinomial identity 
follows by Lucas theorem. 

For $s=q$, it seems to work with those terms with carry over occurring 
multiple of $p$ times and thus contributing nothing. 

This seems to be the way  it works, though we have not explained why. Ideas of 
(4), (5) and (6) 
might be alternate approaches to the proof, and we plan to  pursue them  in future. 
 
\end{enumerate}
\end{remarks}

\subsection{$a(z)$ in terms of $z$}

\begin{theorem}
With the notation as above, for $1 \le s <q$ and  $0 \le k_i<k$, with $1 \le
i \le s$, we have
\begin{align*}
\prod _{i=1}^s a_{q^k-q^{k_i}}
= f_0^{(s-1)q^k} a_{q^k-\sum q^{k_i}}.
\end{align*}
 
\end{theorem}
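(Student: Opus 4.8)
The plan is to imitate the proof of Theorem 1, now with $a(z)=zf'(z)/(1-f(z))$ in place of $h$. First I would remove the dependence on $f_0$. If $f_0=0$ then $f'\equiv 0$, so $a\equiv 0$; both sides vanish when $s\ge 2$, and $s=1$ is vacuous. If $f_0\in F^{*}$, put $\tilde f(z):=f(f_0^{-1}z)$, so $\tilde f_0=1$; using $f'\equiv f_0$ one checks $a(z)=\tilde a(f_0z)$, whence $a_m=f_0^{m}\tilde a_m$, and then each side of the asserted identity is $f_0^{\,sq^{k}-\sum q^{k_i}}$ times the corresponding side of the same identity for $\tilde f$. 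Hence we may assume $f_0=1$ and must prove
\[
\prod_{i=1}^{s}a_{q^{k}-q^{k_i}}=a_{q^{k}-\sum q^{k_i}},\qquad 1\le s<q,\ \ 0\le k_i<k.
\]

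With $f_0=1$ we have $f'(z)=1$, so the defining equation reads $a(z)(1-f(z))=z$, giving $a_0=0$, $a_1=1$, and the recursion $a_{m-1}=a_m-\sum_{j\ge 1}f_j a_{m-q^{j}}$ for $m>1$. Combined with $a_{pm}=a_m^{p}$ (hence $a_{qm}=a_m^{q}$) from the introduction, two facts will be used constantly: $a_{q^{k}}=a_1^{q^{k}}=1$ for all $k\ge 0$, and, since $f(z)\equiv z\pmod{z^{q}}$ gives $a(z)\equiv z/(1-z)\pmod{z^{q+1}}$, $a_m=1$ for $1\le m\le q$.

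Because the recursion for $a$ has the extra term $a_m$ (absent for $h$), I would carry a third induction parameter. For $s<q$, let $C(s,k,z)$ be the assertion that $\prod_{i=1}^{s}a_{q^{k}-q^{k_i}}=a_{q^{k}-\sum q^{k_i}}$ for all $0\le k_i<k$ with exactly $z$ of the $k_i$ zero; prove all $C(s,k,z)$ by induction on $(s,k,z)$ in lexicographic order. The case $s=1$ is trivial, and $k=1$ forces $z=s$ and reduces to $a_{q-1}^{s}=a_{q-s}$, true since both sides are $1$. For $s\ge 2$, $k\ge 2$: if $z=0$, divide all indices by $q$ and use $a_{qm}=a_m^{q}$ together with $C(s,k-1,\cdot)$; if $z\ge 1$, peel off one zero exponent and apply $C(s-1,k,\cdot)$ to the remaining $s-1$ factors, reducing the claim to the key identity
\[
a_{q^{k}-1}\cdot a_{q^{k}-N}=a_{q^{k}-N-1},
\]
where $N$ is the sum of the remaining $s-1$ powers $q^{k_i}$. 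For this I would expand the right-hand side by the recursion as $a_{q^{k}-N-1}=a_{q^{k}-N}-\sum_{j\ge1}f_j a_{q^{k}-N-q^{j}}$ (valid as $N\le(s-1)q^{k-1}\le q^{k}-2$), and use $a_{q^{k}-1}-1=a_{q^{k}}-1-\sum_{j\ge1}f_j a_{q^{k}-q^{j}}=-\sum_{j\ge1}f_j a_{q^{k}-q^{j}}$; subtracting, the key identity becomes $\sum_{j\ge1}f_j\bigl(a_{q^{k}-q^{j}}a_{q^{k}-N}-a_{q^{k}-N-q^{j}}\bigr)=0$, which it suffices to verify termwise (and one may take $j\le k-1$, since otherwise both terms vanish). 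Re-expanding $a_{q^{k}-N}$ via $C(s-1,k,\cdot)$, the product $a_{q^{k}-q^{j}}a_{q^{k}-N}$ is a product of $s$ admissible factors with exponents $(j,k_1,\dots,k_{s-1})$ — having only $z-1$ zeros because $j\ge1$ — so $C(s,k,z-1)$ yields $a_{q^{k}-q^{j}}a_{q^{k}-N}=a_{q^{k}-q^{j}-N}$, as required. Every instance invoked, $C(s,k-1,\cdot)$, $C(s-1,k,\cdot)$, $C(s,k,z-1)$, is strictly smaller in the lexicographic order, so the induction closes.

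The step I expect to be the main obstacle is exactly this bookkeeping: ensuring that the reduction of the key identity never appeals to an instance of $C$ that is not lexicographically smaller — which is the whole point of the zero-exponent parameter, and is also where $s<q$ (versus $s\le q$ in Theorem 1) is genuinely used, since for $s=q$ the base case $a_{q-1}^{q}=a_{q-q}=a_0=0$ already fails. As a closing remark: once Theorem 4 is proved, the power-sum representation $a_m=\sum_{v\in V_d}(w+v)^{-m}$ (for $f$ of degree $q^{d}$ with $f_0=1$ and $f(w)=1$) should lift it to a general identity on affine-linear forms, $\prod_i\sum_{\theta\in\F_q^{d}}L_i(\theta)/L_0(\theta)=\sum_{\theta\in\F_q^{d}}\prod_iL_i(\theta)/L_0(\theta)$, paralleling the passage from Theorem 1 to Theorems 2 and 3; note that its right-hand denominator carries only a first power, so it is not literally a specialization of Theorem 2.
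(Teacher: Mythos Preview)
Your argument is correct. The reduction to $f_0=1$ via $\tilde f(z)=f(f_0^{-1}z)$ is clean and legitimate, and the triple induction on $(s,k,z)$ with $z$ the number of zero exponents closes exactly as you say: the $z=0$ step descends to $k-1$ via the $q$-power relation, and for $z\ge 1$ the single application of the recursion plus the appeal to $C(s-1,k,z-1)$ and $C(s,k,z-1)$ reduces the key identity $a_{q^k-1}a_{q^k-N}=a_{q^k-N-1}$ termwise. The boundary cases ($j\ge k$, the range $N\le (s-1)q^{k-1}\le q^k-2$, and $a_m=1$ for $1\le m\le q$) are all handled.

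The paper follows the same underlying strategy---recursion plus $q$-power relation, modelled on the proof of Theorem~1---but organizes the bookkeeping differently. It keeps $f_0$ general, \emph{iterates} the recursion $r$ times to obtain the closed form
\[
f_0^{\,r}a_{m-r}=\sum_{j=0}^{r}\binom{r}{j}(-1)^{j}\!\!\sum_{i_1,\dots,i_j>0}a_{m-q^{i_1}-\cdots-q^{i_j}}f_{i_1}\cdots f_{i_j},
\]
and then matches the two sides of $a_{q^k-1}a_{N-r}=f_0^{q^k}a_{N-(r+1)}$ using Pascal's identity $\binom{r}{j}+\binom{r}{j-1}=\binom{r+1}{j}$ together with the induction on $k$. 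Your extra induction parameter $z$ is exactly what lets you avoid this iterated expansion: instead of summing over $(i_1,\dots,i_j)$ with binomial weights, you peel off one zero exponent at a time. The trade-off is that the paper's version is closer in shape to its Theorem~1 proof, while yours is notationally lighter and makes the role of the restriction $s<q$ more transparent (it is used only at the base $k=1$, where $a_{q-s}=1$ requires $q-s\ge 1$).

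One small correction to your closing remark: the affine-linear identity you sketch is not quite the right shape. The paper's Theorem~5 carries an extra factor $\bigl(\sum_{\theta}1/L_0(\theta)\bigr)^{s-1}$ on the right, corresponding precisely to the $f_0^{s-1}$ in Theorem~6 (since $f_0$ is the power sum for exponent~$1$ in the $\a$-interpretation). Without that factor the identity fails already for $d=1$, $s=2$.
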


\begin{proof} We follow the method of proof of Theorem 1. 
Equating coefficients of $z^m$ in $a-af=f_0z$ for gives 
$a_1=f_0$ (and thus $a_{q^k}=f_0^{q^k}$) and recursion
$$a_m-f_0a_{m-1}-\sum_{i\geq 1} a_{m-q^i}f_i=0, $$
giving, in particular, 
$$f_0a_{q^k-1}=f_0^{q^k}-\sum_{i\geq 1}a_{q^{k-i}-1}^{q^i}f_i.$$

Iterating the recursion, we get 
$$f_0^ra_{m-r}=\sum_{j=0}^r \sum_{i_1, \cdots, i_j>0} a_{m-q^{i_1}-\cdots q^{i_j}}
{r\choose j}(-1)^jf_{i_1}\cdots f_{i_j}.$$

We write $N=q^k-\sum q^{k_\ell}$.
It is enough to prove $a_{q^k-1}a_{N-r}=f_0^{q^k}a_{N-(r+1)}$. 
We proceed by induction on $r$ and on $k$ as before. 

By iterations above, the right side of the claimed equality is 
$$f_0^{q^k-r-1}\sum_{j=0}^{r+1}\sum_{i_1,\cdots, i_j >0}a_{N-\sum q^{i_n}}{r+1\choose j}(-1)^j
f_{i_1}\cdots f_{i_j},$$ 
and the left side is 
$$f_0^{-r-1}(f_0^{q^k}-\sum a_{q^k-q^i}f_i)(\sum_{j=0}^r \sum_{i_1,\cdots, i_j >0}a_{N-\sum q^{i_n}}{r\choose j}(-1)^j
f_{i_1}\cdots f_{i_j}.$$  
If we apply the $q$-power relation on coefficients and induction on $k$ when 
we multiply out the two sums (exactly as in 
proof of the first Theorem) together with the Pascal triangle binomial 
identity ${r\choose j}+{r\choose j-1}={r+1\choose j}$ when we combine the 
two resulting sums, the left side turns into the right side. 
\end{proof}

\begin{remarks}
\begin{enumerate}
\item For $1\le s\le q$, we have
$a_{s} = f_0^{s}$.

\item In particular, for $1 \le s <q$, we have
\begin{align}
a_{q^k-1}^s  = f_0^{(s-1)q^k}a_{q^k-s}.
\end{align}
\item In general, (3.1) does not hold for $s = q$. Let $q = 3$,
$f(z) = f_0 z + f_1 z^q + f_2 z^{q^2}$, $k = 2$ and $s = q$. Then
$a_{q^k-q}  = a_{q-1}^q = f_0^6$ and $a_{q^k-1} = f_{0}^{8} -  f_{0}^{5} f_{1}$.

\item Assume $f_0 = 1$ and let $g = \sum _{i =0}^\infty g_i z^{q^i}$ be the
compositional inverse of $f$. Then, from numerical evidence, it seems 
that for all $k \ge 1$, we have
\begin{align*}
a _{q^k-1} -a_{q^{k-1}-1} = g_{k-1}.
\end{align*}
Since we do not know any application of this, we have not attempted a proof. 

\end{enumerate}

\end{remarks}

\subsection{$a(z)$ in terms of $u$}

Equating coefficients in the defining equation gives the 
recursion
$$f_d\a_m=\delta_{q^d-1, m}f_0+\a_{m-q^d}-\sum_{j<d}f_j\a_{m-q^d+q^j}.$$

Now the polynomial corresponding to this recursion is a $\F_q$-linear 
polynomial plus  a constant, thus roots form affine space (translation 
of a $\F_q$-vector space), and the initial conditions again 
give the coefficient as the power sum of the roots $\mu +\sum \theta_ib_i$, 
by Newton's formulas or \cite[5.1.2 or 5.6.2]{T} as before. 
Just as in the case of 1.1 and 1.2, now 1.3 corresponds to negative 
powers while 1.4 corresponds to power sums for positive powers. 
By similar method of proof how we derived Theorem 3 via more general Theorem 
2 deduced from Theorem 1, we now prove more general

\begin{theorem}
Let $F$ be a field containing $\F_q$, 
let $M_j, \mu, b_1, \cdots, b_d, B_{ij}\in F$ ($i=1$ to $d$ and $j=1$ to $s<  q$). Then (assuming no zeros in 
denominators, or in other words, make common denominators and look at the polynomial identity
for numerators)
$$\prod_{j=1}^s\sum_{(\theta_1, \cdots, \theta_d)\in \F_q^d}\frac{\sum_i (M_j+ \theta_iB_{ij})}{\sum_i (\mu+\theta_ib_i)}
=(\sum_{\theta} \frac{1}{\sum_i (\mu+\theta_ib_i)})^{s-1} \sum_{\theta} \frac{\prod_j (\sum_i (M_j+\theta_iB_{ij}))}{\sum_i (\mu+ \theta_ib_i)}.$$
\end{theorem}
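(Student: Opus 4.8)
The plan is to mirror exactly the specialization argument used for Theorem 2, but now with the affine geometry in place of the linear one. First I would recall, from the paragraph opening 1.4, that when $f$ is an arbitrary polynomial of degree $q^d$ the coefficients $\a_m$ (with $u=1/z$) are power sums $\a_m = -\sum_{v} v^{-m}$ of the roots $v = \mu + \sum_i \theta_i b_i$ of an affine space (a translate of the $\F_q$-vector space $V_d$), and that the two families of identities 1.3 and 1.4 express multiplicative relations among these power sums for positive and negative exponents respectively. Concretely, specializing Theorem 4 (identity (3.1) and its full version) to this situation gives, for the roots $v$ of the recursion polynomial, a relation of the form $\prod_{j=1}^s\bigl(\sum_v v^{q^{k_j}}\bigr) = (\text{power of the leading term})\cdot \sum_v v^{\sum_j q^{k_j}}$, valid whenever $1\le s<q$ and the $k_j$'s satisfy the gap conditions. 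Rewriting these power sums back in terms of the variables, $\sum_v v^{-m}$ becomes $\sum_{\theta\in\F_q^d} \bigl(\sum_i(\mu+\theta_i b_i)\bigr)^{-1}$ after the substitution $m\mapsto$ appropriate exponents, and $\sum_v v^{q^{k_j}}$ specializes to $\sum_\theta \sum_i(\mu + \theta_i b_i^{q^{k_j}})$; setting $B_{ij} = b_i^{q^{k_j}}$ and $M_j$ the Frobenius twist of $\mu$ produces exactly the claimed numerator identity for these special values.

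Next I would turn this into the full polynomial identity by the same Chai-type argument invoked in the proof of Theorem 2. Clearing denominators, subtract one side from the other to obtain a polynomial $\psi$ over $\F_q$ in the $(s+1)(d+1)$ variables $\mu, M_1,\dots,M_s, b_1,\dots,b_d, B_{11},\dots,B_{sd}$. The previous paragraph shows $\psi$ vanishes on the Zariski-dense family of specializations $B_{ij} = b_i^{q^{k_j}}$, $M_j = \mu^{q^{k_j}}$ (with $k_j$ ranging over all sufficiently spread-out positive integers), and one argues, exactly as for Theorem 2 and citing \cite[3.1]{Chai}, that taking the gaps between $k$ and the $k_j$'s large enough forces no cancellation between monomials carrying distinct Frobenius powers, so each $b_i^{q^{k_j}}$ (and $\mu^{q^{k_j}}$) may be replaced by an independent variable; hence $\psi\equiv 0$. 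One subtlety relative to Theorem 2 is the presence of $\mu$ and the $M_j$: the specialization must treat $\mu^{q^{k_j}}$ consistently with $b_i^{q^{k_j}}$, which is automatic since all of these arise as the same Frobenius twist applied to the constant-term-plus-basis data of the recursion polynomial, so the degeneration argument applies verbatim.

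The main obstacle I anticipate is bookkeeping rather than conceptual: one has to check that the affine specialization genuinely covers "enough" of the parameter space, i.e. that the map sending $(\mu, b_1,\dots,b_d)$ and a choice of exponents $k_1 < \dots < k_s$ (all large, well-separated) to the tuple $(\mu, \mu^{q^{k_1}},\dots, b_1, \dots, b_d, b_1^{q^{k_1}},\dots)$ has Zariski-dense image in the sense required by the no-cancellation/\cite[3.1]{Chai} criterion, and that the exponent arithmetic — translating $\sum_v v^{-m}$ for the specific $m$'s appearing in (3.1) into the stated reciprocal linear forms — lines up. I would also double-check the edge cases $s = q-1$ and the reason the identity genuinely fails at $s = q$ (the affine analogue of Remark 3 after Theorem 4), to make sure the hypothesis $s < q$ is exactly what the specialization forces. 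None of this should require new ideas beyond those already deployed for Theorems 1–4, so the proof can be kept as short as the proof of Theorem 2, essentially a pointer: "specialize Theorem 4 to the roots of the affine recursion polynomial of 1.4 and then promote to a polynomial identity by the argument of Theorem 2."
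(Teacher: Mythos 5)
Your proposal follows essentially the same route as the paper: Theorem 4 supplies the verified special cases (via the power-sum interpretation of the $a_m$ over the affine root space), and the general identity is then deduced by the same clear-denominators/no-cancellation specialization argument as in Theorem 2. The only bookkeeping slip is in the exponents: since $a_{q^k-q^{k_j}}$ is the power sum for the exponent $q^{k_j}-q^k$, the correct specialization is $B_{ij}=b_i^{q^{k_j-k}}$, $M_j=\mu^{q^{k_j-k}}$ with $k_j<k$ (inverse Frobenius twists), not $b_i^{q^{k_j}}$ --- the latter is the specialization used afterwards to derive Theorem 6 from Theorem 5.
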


\begin{proof}
We proceed as in the case of Theorem 2. The identity specializes to 
the conclusion of the previous theorem, when $B_{ij}=b_i^{q^{k_j-k}}$
and $M_j=\mu^{q^{k_j-k}}$. (Note that $f_0=a_1$ is the power sum 
for $-1$-th power.)  The rest of the proof proceeds  exactly as that 
of Theorem 2. 
\end{proof}

\begin{theorem}
With the notation as above, for $1 \le s <q$ and  $k_i \ge 0$, with $1 \le i
\le s$, we have
\begin{align*}
\prod _{j=1}^s \a_{q^{k_j}-1}
= f_0^{s-1} \a_{q^{k_1} + \dotsb
+q^{k_{s}}-1}.
\end{align*}

\end{theorem}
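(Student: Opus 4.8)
The plan is to mirror exactly the derivation of Theorem 3 from Theorem 2, now using Theorem 5 in place of Theorem 2. Recall that in Section 1.4 the coefficients $\a_m$ were identified as power sums $\a_m = -\sum (\mu + \sum_i \theta_i b_i)^{-m}$ over the affine space of roots $\mu + \sum_i \theta_i b_i$ with $(\theta_1,\dots,\theta_d)\in\F_q^d$, where the $b_i$ form a basis of $V_d$ and $f_0 = a_1$ is the power sum for the $-1$st power, i.e. $f_0 = -\sum (\mu + \sum_i \theta_i b_i)^{-1}$ (up to the appropriate sign bookkeeping, which I would fix carefully at the start). The claimed identity $\prod_{j=1}^s \a_{q^{k_j}-1} = f_0^{s-1}\,\a_{q^{k_1}+\dots+q^{k_s}-1}$ is then precisely the statement of Theorem 5 after the specialization $B_{ij} = b_i^{q^{k_j-k}}$, $M_j = \mu^{q^{k_j-k}}$ (with $k$ chosen large enough that all exponents $k_j-k$ make sense as they did for Theorem 3, or more honestly, by clearing denominators as Theorem 5 is stated for a polynomial identity and then specializing).

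First I would state precisely the power-sum interpretation: for $m\ge 1$, $\a_m$ equals $(-1)$ times the sum of $m$th powers of the inverses of the $q^d$ roots $\mu + \sum \theta_i b_i$. This follows from the recursion displayed at the start of 1.4 together with the Newton-identity / companion-polynomial argument cited as \cite[5.1.2 or 5.6.2]{T}, exactly as was done for $\h_m$ in Section 1.2. Next I would observe that raising a root-inverse to the power $q^{k_j}$ amounts to applying the $q^{k_j}$-power Frobenius, which sends $\mu + \sum \theta_i b_i$ to $\mu^{q^{k_j}} + \sum \theta_i b_i^{q^{k_j}}$ (since $\theta_i\in\F_q$ are fixed by Frobenius); dividing through by a common $q^k$-power to land in the range covered by Theorem 5, this is the substitution $B_{ij} = b_i^{q^{k_j-k}}$, $M_j = \mu^{q^{k_j-k}}$, and $f_0$ becomes the $s=1$, exponent $1$ instance of the same sum, i.e. the factor $\big(\sum_\theta (\sum_i(\mu+\theta_i b_i))^{-1}\big)^{s-1}$ in Theorem 5. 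Plugging these into the conclusion of Theorem 5 and tracking signs yields exactly the asserted identity.

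The main obstacle I anticipate is not conceptual but bookkeeping: getting the signs and the normalization of $f_0$ right, and making sure the specialization is legitimate even though Theorem 5 as stated involves $k_j-k$ which a priori could be negative. The clean fix, already used implicitly for Theorem 3, is that Theorem 5 is really a polynomial identity in the numerators after clearing denominators, valid for all values of the $B_{ij}, M_j$; so one specializes to $B_{ij}=b_i^{q^{k_j}}$, $M_j = \mu^{q^{k_j}}$ directly (no negative exponents), at the cost of replacing each $\a_m$ by its expression as a power sum of root-inverses and re-homogenizing. A second, minor, point to check is the strict inequality $s<q$: this is exactly the constraint under which Theorem 5 was proved (the failure at $s=q$ is illustrated in Remark 3 of Section 3.3 for the analogous $a(z)$-in-$z$ statement), so it is inherited automatically and needs no separate argument.

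Concretely, the proof I would write is essentially one sentence: \emph{we specialize Theorem 5 to the case where $b_1,\dots,b_d$ is a basis of an arbitrary $V_d$, $\mu$ is the translation defining the affine space of roots attached to $a(z)$ in $u$, $B_{ij}=b_i^{q^{k_j}}$ and $M_j=\mu^{q^{k_j}}$, and read off the resulting identity for the power sums $\a_m = -\sum(\mu+\sum_i\theta_i b_i)^{-m}$, noting that the power sum for exponent $1$ is $f_0$, which gives the required relation.} The only thing worth spelling out beyond that is the identification of $\a_m$ as that power sum and of $f_0$ as its exponent-$1$ specialization, both of which were set up in the paragraph preceding the statement of Theorem 5.
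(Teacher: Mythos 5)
Your proposal is correct and follows essentially the same route as the paper: the paper's proof of Theorem 6 is exactly the specialization of Theorem 5 to $B_{ij}=b_i^{q^{k_j}}$, $M_j=\mu^{q^{k_j}}$, using the power-sum interpretation of the $\a_m$ set up before Theorem 5 and identifying the bracket $\sum_\theta 1/(\sum_i(\mu+\theta_i b_i))$ with $f_0$ (which the paper checks via $f_0=\a_{q^d-1}^2/\a_{2q^d-1}$ from the recursion). The only difference is that you spend more care on the legitimacy of the specialization and the sign bookkeeping, which the paper treats more tersely.
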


\begin{proof}
Since these coefficients represent the power sums mentioned above, 
the claimed relations follow by specializing the identity of 
the previous Theorem to $B_{ij}=b_i^{q^{k_j}}$ and $M_j=\mu^{q^{k_j}}$ 
by noting that the first bracket on the right side of Theorem 5 exactly 
matches $f_0$ as claimed, since the recursion formula above immediately 
implies that $\a_{q^d-1}=f_0/f_d$, and $\a_{2q^d-1}=\a_{q^d-1}/f_d$, so that 
$f_0=\a_{q^d-1}^2/\a_{2q^d-1}$ in this case. 
\end{proof}

\begin{remarks}
\begin{enumerate}
\item It follows that
$\a_{q^{k_1} + \dotsm + q^{k_s}-1}=0$ if some $k_i<d$, because
 $\a_{q^k-1}=0$ if $k<d$.
\item For $1 \le s<q$, we have
\begin{align}
\a_{q^k-1}^s  = f_0^{s-1} \a_{sq^k-1},
\end{align}

\item Here we have an example showing  $s\geq q$ does not work in (4.1).
Let $q = 3$, $f(z) = f_0 z + f_1 z^q + f_2 z^{q^2}$ with $f_0 \ne 0$, $s =q$
and $k = 2$. Then $\a_{qq^{k}-1} = (-f_0 f_1^3+  f_0f_2)/f_2^4$
and $\a_{q^{k}-1} = f_0/f_2$.

\item Note the last three  theorems are vacuously true for $q=2$, unlike 
the first three, and all are vacuously true for $\ell=s=1$. 

\item Theorem 5  relates to Perkins' identity \cite[Thm. 4.1.2]{Per} via 
$b_i=\theta^i$, $B_{ij}= t_j^i$, $\mu=\theta^{d+1}$ and $M_j=t_j^{d+1}$.

\end{enumerate}
\end{remarks}

\section{Applications}

In addition to the general interest of these relations, 
they have many  applications to the function field arithmetic \cite{G, T}. 
There are well-known (see e.g., \cite[Cor. 1.2.2]{G} links of root spaces
of $F_q$-linear polynomials to $F_q$-vector spaces. Many  
finite, infinite dimensional $F_q$-vector spaces arise naturally in the
function field arithmetic as, for example, 
Riemann-Roch spaces, rings of integers in function fields, 
sets of such with degree bounded by some constant etc. 
Sums over these appear as zeta values, finite power sums etc. 
Thus we have applications to zeta, multizeta values \cite{T, Tm, LR}. 
We now describe these in a little more detail. 

\subsection{Basic factorizations} Let us first recall 
some useful factorizations and notations (see e.g., 
\cite[]{G, T}), we know that  $[n]:=t^{q^n}-t$ is 
the product of monic irreducibles of degree dividing $n$,  
$D_m:= \prod_{i=0}^{m-1} [m-i]^{q^i}$ is the product 
of all monic polynomials of degree $m$, whereas 
$L_n:=\prod_{i=1}^n[i]$ is the least common multiple of 
all monic polynomials of degree $n$. Write $d_i:=D_i$, $\ell_i:=(-1)^iL_i$.

\subsection{Basic analogies} Let us quickly recall some basics of  number fields
-
function fields analogy \cite{G, T} in
the simplest case: between $\Z, \Q, \R$, the exponential `$e^z$', 
the logarithm `$\log(z)$', the
Euler-Riemann zeta `$\zeta(s)$', the factorial `$!$', the binomial 
coefficient `${x\choose n}$' on one hand, and
$\F_q[t], \F_q(t), \F_q((1/t))$, the Carlitz exponential `$e(z)$', 
the Carlitz logarithm `$\ell(z)$', the
Carlitz-Goss zeta `$\zeta_c(s)$' and the Carlitz factorial `$!_c$', 
and the Carlitz binomial coefficient `${x\choose n}_c$'
respectively. We now define these quantities.

\subsection{Basic quantities of function field arithmetic} If a positive integer $n$ has the base $q$ expansion,
$n=\sum n_iq^i$, $0\leq n_i<q$, then $n!_c:=\prod d_i^{n_i}\in
\F_q[t]$.   We
have $e(z)=\sum z^{q^i}/d_i= \sum z^{q^i}/q^i!_c$ parallel to $e^z= \sum z^n/n!$.
We define $\ell(z)$ to be the compositional inverse of $e(z)$, then
$\ell(z)=\sum z^{q^i}/\ell_i$. We write $e(x\ell(z))=\sum {x\choose q^d}_cz^{q^d}$, 
and ${x\choose n}_c=\prod {x\choose q^i}_c^{n_i}$.

Finally, for $s\in \Z$, $\zeta_c(s):= \sum_{d=0}^{\infty} \sum
1/a^s\in \F_q((1/t))$, where the second sum is over monic polynomials
$a\in \F_q[t]$ of degree $d$. We consider power sums 
$S_d(k)=\sum a^{-k}$, where the sum is over monic $a\in A$ of degree $d$, 
and $S_{<d}(k) =\sum a^{-k}$, where now the sum is over monic $a\in A$ 
of degree less than $d$. 

\subsection{Application to factorization of Bernoulli-Carlitz fractions} 
The Bernoulli-Carlitz 
numbers ${\mathcal B}_n\in \F_q(t)$ are defined analogously 
by $z/e(z)=\sum {\mathcal B}_nz^n/n!_c$, by analogy with the classical 
case. They occur in Euler type evaluation due to 
Carlitz of $\zeta_c$ at `even' positive integers. 
Their denominators satisfy von-Staudt type theorems and 
numerators occur in the Kummer-Herbrand-Ribet theorem analogs 
\cite{T, G, Tae} due to Goss, Sinnott, 
Okada and Taelman, explaining the significance of 
the factorization and multiplicative relations.  (Recall 
that the factorizations of the usual Bernoulli numbers
are known to be important in many areas of mathematics, Herbrand-Ribet and 
Mazur-Wiles theorems, modular forms congruences, 
stable homotopy and Kervaire-Milnor formula, just to mention 
a few).  See also \cite{Tb} for the second author's counter-examples 
to Chowla conjectures, where the knowledge of these factorizations 
helped.  

Carlitz  \cite[Thm. 4.16.1]{T} proved that for $n=q^k-1$, 
we have 
$${\mathcal B}_n= {\mathcal B}_n(n-1)!_c/n!_c=
(-1)^k\prod_{i=1}^{k-1}[k-i]^{q^i-2}/[k].$$
 
Our main result with $f(z)=e(z)$ thus generalizes such factorizations 
to much wider families of $n$'s. One sees, in particular, 
that in contrast to the subtleties for general $n$, 
whether a prime divides the numerator (or denominator) of ${\mathcal B}_n$, 
just depends on the degree of the prime for $n$'s in these families. 

\subsection{Applications to power sums}
 We have

\begin{align*}
   {\binom{z}{q^d}}_c = \sum _{i=0}^{d} \frac{z^{q^i}}{d_i \ell_{d-i}^{q^i}}
   =\frac{1}{D_d}\prod_{a\in A, \deg a<d} (z-a)
\end{align*}
Consider now   $f(z) = \ell_d {\binom{z}{q^d}}_c$.
Carlitz proved~\cite[Thm. 7.2]{C} that the inverse is
$$   g(z)  =\sum _{j=0}^\infty \frac{\ell_{d+j-1}}{\ell_j \ell_{d-1}^{q^j}}
z^{q^j}.$$

Let $f(z)= {\binom{z}{q^d}}_c$, then $f_0=1/\ell_d$.  We have (with signs corrected) 
 \cite[p. 160]{C}, \cite[Thm. 5.6.3]{T}, \cite[3.2]{Tm}
 $$h_k=S_{<d}(k), \ \ \h_k=S_{<d}(-k),\  \ a_k=S_d(k), \ \ \a_k=S_d(-k),$$
 where the first two equalities hold for $k$ `even' and the last two for any $k
\ge 1$.
 We have, by \cite[Thm. 9.2]{C}, \cite[Pa. 941]{Carlitz4}, \cite[Pa. 283]{L}, 
 \cite[Pa. 941]{Carlitz4} (as well as \cite[Thm. 4.1]{Ge} ) respectively,
 $$h_{q^i-1}=\frac{\ell_{d+i-1}}{\ell_i \ell_{d-1}^{ q^i}}, \ \ \ \  a_{q^i-1}=\frac{\ell_{d+i-1}}{\ell_{i-1} \ell_d^{q^i}}, $$
 and for $i\geq d$, 
 $$ \h_{q^i-1}=\frac{d_{i-1}^q}{\ell_{d-1} d_{i-d}^{q^d} },  \ \ \ \ \a_{q^i-1}=
\frac{d_i}{\ell_d d_{i-d}^{q^d}}.$$

These follow by Newton's identities for power sums specialized to these values 
using the linear equations connected to $e_d(z)$ above as well as to 
$e_d(t^d+z)$. 
 
 Our theorems thus specialize giving several more evaluations in this case with full
 understanding of their factorizations. We list a few known special cases of this. 
 
 In this special situation, Theorem 6 specializes 
 to \cite[Thm. 4.1]{L} (also \cite[5.6.4 (1)]{T}), and Theorem 3 to \cite[Theorem 5.1]{L}. 
 The formulas for $S_{<d}(l(q-1))$ in \cite[pa. 2329]{Tm} as well as
the formula for the Bernoulli-Carlitz number ${\mathcal B}_{l(q-1)}$ in
\cite[Corollary 4.4 (ii), pa. 218]{Ge1989} ($l\le q$), are
consequences of Theorem 1. Theorem 4 generalizes (and proves) the conjecture 
2.10 in \cite{Jalr10} about $S_d(mq^i-1)$.

\subsection{Application to multizeta} 
  We refer to \cite [3.4]{Tm}, 
 \cite[pa. 281, 283]{Jalr11b}, \cite{LTp}  and \cite[4.1, 4.2]{LTp2}  to see how such factorizations are used, by cancellations 
 of appropriate factors when we take products of zeta values 
 or iterated products involved in the definition of multizeta values,   
 to prove new multizeta identities, as well as simplify  earlier 
 proofs, which had used special cases proved separately. 
 
 We just give one example \cite{LTp}:  We have 
$$
\zeta(q^n- \sum _{i=1}^s q^{k_i}, (q-1)q^n)
= \frac{(-1)^s}{\ell_1^{q^n}} \prod _{i=1}^s [n-k_i]^{q^{k_i}}
\zeta(q^{n+1} - \sum _{i=1}^s q^{k_i}),$$
where $n>0$, $1 \le s <q$, $0 \le k_i <n$.

{\bf Acknowledgments} Professor Shreeram Abhyankar was the first research 
mathematician that the second author  met and learned from. Given 
Professor Abhyankar's passion for finite fields, linear functions 
and `high school' mathematics \cite{A} applications to `university' mathematics, 
 the authors would like to dedicate 
this article to his memory.

\end{document}